\numberwithin{equation}{section}
\numberwithin{figure}{section}
\declaretheorem[name=Theorem,style=plain,numberwithin=section]{thm}
\newtheorem*{thm*}{Theorem}
\declaretheorem[name=Remark,style=remark,sibling=thm]{rem}
\crefname{thm}{Theorem}{Theorems}
\crefname{prop}{Proposition}{Propositions}
\crefname{lem}{Lemma}{Lemmas}
\crefname{cor}{Corollary}{Corollaries}
\crefname{example}{Example}{Examples}
\crefname{defn}{Definition}{Definitions}
\crefname{rem}{Remark}{Remarks}
\crefname{claim}{Claim}{Claims}
\crefname{assum}{Assumption}{Assumptions}
\crefname{enumi}{}{}
\crefname{enumii}{}{}
\crefname{enumiii}{}{}
\crefname{equation}{}{}
\newcommand{\enumlabelformat}{\roman}
\newcommand{\enumlabelfont}[1]{#1}
\newlength{\thelabelsep}
\setlist{labelsep=\thelabelsep}
\setlist[enumerate]{font=\enumlabelfont,label=(\enumlabelformat*),leftmargin=2.5em}
\setlist[itemize]{leftmargin=2.5em,label=$-$}
\newcounter{inlineenum}
\renewcommand{\theinlineenum}{\enumlabelformat{inlineenum}}
\newenvironment{inlineenum}
  {\setcounter{inlineenum}{0}%
   \renewcommand{\item}{\refstepcounter{inlineenum}{(\theinlineenum)\hspace{\thelabelsep}}}
  }
  {\ignorespacesafterend}
\newcommand\numberthis{\addtocounter{equation}{1}\tag{\theequation}}
\newcommand{\overbar}[1]{\mkern 1mu\overline{\mkern-1mu#1\mkern-1mu}\mkern 1mu} % shorter \overline; from http://tex.stackexchange.com/questions/22100/the-bar-and-overline-commands
\newcommand{\ol}[1]{\overline{#1}}
\newcommand{\R}{\mathbb{R}}
\newcommand{\Cplx}{\mathbb{C}}
\newcommand{\img}[1]{\operatorname{img}(#1)} % image of map
\DeclareMathOperator{\dimsymb}{dim}
\renewcommand{\dim}[2][]{\dimsymb_{#1}(#2)} % dimension of #2 over field #1
\newcommand{\trace}{\operatorname{tr}}
\newcommand{\spec}[2][]{{\sigma_{#1}(#2)}} % spectrum of #1 in the Banach algebra #2
\newcommand{\essspec}[1]{{\sigma_e(#1)}} % essential spectrum of operator
\newcommand{\dom}[1]{\operatorname{dom}(#1)} % domain of linear operator or other function
\newcommand{\bergman}[1][2]{A^{#1}} % Bergman space
\newcommand{\dbar}{{\smash{\overbar{\partial}}}} % dbar operator
\newcommand{\htensor}{\mathbin{\hat\otimes}} % Hilbert space tensor product
\DeclareMathOperator*{\psum}{% primed sum
 \mathchoice
  {\sideset{}{^{\,\prime}}{\sum}}
  {\sum^{\prime}\!\!}
  {\sum^{\prime}\!\!}
  {\sum^{\prime}\!\!}
}
\newlength{\hilbcomparrow}
\NewDocumentCommand{\dbarN}     { O{} O{} }{N_{#1}^{#2}}       % dbar-Neumann operator on forms of bidegree #1 for bundle/weight #2
\NewDocumentCommand{\clapl} { O{} O{} }{\square_{#1}^{#2}} % complex Laplacian on forms of bidegree #1 for bundle/weight #2
\NewDocumentCommand{\minsol}{ O{} O{} }{S_{#1}^{#2}}       % minimal solution operator
\newcommand{\ovli}{\overline}
\newcommand{\ovprt}{\dbar}
\begin{document}

\bibliographystyle{abbrvnatnourls}

\title{On some spectral properties of the weighted \texorpdfstring{$\dbar$}{dbar}-Neumann operator}
\author{Franz Berger}
\author{Friedrich Haslinger}
%   \address{Faculty of Mathematics, University of Vienna, Oskar-Morgenstern-Platz~1, 1090~Vienna, Austria}
\address{Fakult\"at f\"ur Mathematik, Universit\"at Wien, Oskar-Morgenstern-Platz 1, A-1090 Wien, Austria}
\email{franz.berger2@univie.ac.at, friedrich.haslinger@univie.ac.at}
\thanks{Partially supported by the Austrian Science Fund (FWF): P23664.}

\begin{abstract}
 We study necessary conditions for compactness of the weighted $\dbar$-Neumann operator on the space $L^2(\mathbb C^n,e^{-\varphi})$ for a plurisubharmonic function $\varphi$.
 Under the assumption that the corresponding weighted Bergman space of entire functions has infinite dimension, a weaker result is obtained by simpler methods.
 Moreover, we investigate (non-) compactness of the $\dbar$-Neumann operator for \emph{decoupled} weights, which are of the form $\varphi(z) = \varphi_1(z_1) + \dotsb + \varphi_n(z_n)$.
 More can be said if every $\Delta\varphi_j$ defines a nontrivial doubling measure.
\end{abstract}

\keywords{weighted $\dbar$-Neumann problem, decoupled weights, essential spectrum}
\subjclass[2010]{Primary 32W05; Secondary  30H20, 35N15, 35P10}

\maketitle
\tableofcontents

\section{Introduction}

We consider the Cauchy-Riemann complex (or Dolbeault complex)
$$ 0 \to C^\infty(\mathbb C^n) \xrightarrow{\dbar} \Omega^{0,1}(\mathbb C^n) \xrightarrow{\dbar} \Omega^{0,2}(\mathbb C^n) \xrightarrow{\dbar} \cdots \xrightarrow{\dbar}\Omega^{0,n}(\mathbb C^n) \to 0, $$
where $\Omega^{0,q}(\mathbb C^n) \coloneqq C^\infty(\mathbb C^n,\Lambda^{0,q}T^*(\mathbb C^n))$ denotes the space of smooth $(0,q)$-forms on $\mathbb C^n$, and the operator $\dbar$ is defined by
$$ \dbar u \coloneqq \psum_{|J|=q} \sum_{j=1}^n \frac{\partial u_J}{\partial \overline z_j}\, d\overline z_j \wedge d\overline z_J $$
for $u=\psum_{|J|=q}u_J\,d\overline{z}_J \in \Omega^{0,q}(\mathbb C^n)$, and the primed sum indicates that summation is done only over increasing multiindices $J=(j_1, \dots , j_q)$ of length $q$, and $d\overline z_J \coloneqq d\overline z_{j_1} \wedge \dots \wedge d\overline z_{j_q}$.

If $\varphi \colon \mathbb{C}^n \to \mathbb{R}$ is a $C^2$ function, we denote by $L^2_{0,q}(\mathbb C^n,e^{-\varphi})$ the Hilbert space completion of $\Omega^{0,q}_c(\mathbb C^n)$ (compactly supported elements of $\Omega^{0,q}(\mathbb C^n)$) with respect to the inner product
\begin{equation}\label{eq:weighted_inner_prod}
 (u,v)_\varphi \coloneqq \psum_{|J|=q} \int_{\mathbb C^n} u_J\overline{v_J}\,e^{-\varphi} d\lambda,
\end{equation}
with $\lambda$ the Lebesgue measure on $\mathbb C^n$.
Let $\|\cdot\|_\varphi$ be the associated norm.
The $\dbar$-operator extends maximally to a closed operator on $L^2_{0,q}(\mathbb C^n,e^{-\varphi})$ by letting it act in the sense of distributions on
$$ \dom{\dbar} \coloneqq \big\{ f \in L^2_{0,q}(\mathbb C^n, e^{-\varphi}) : \dbar f \in L^2_{0,q+1}(\mathbb C^n, e^{-\varphi})\big\}. $$
In other words, $\dbar f$ is defined to be the distributional derivative of $f$, viewed as an element of $L^2_{0,q+1}(\mathbb C^n, e^{-\varphi})$.
Note that the symbol of its formal adjoint with respect to \cref{eq:weighted_inner_prod}, which we denote by $\dbar_\varphi^t$, depends on the weight $\varphi$, and we denote the Hilbert space adjoint of $\dbar$ by $\dbar_\varphi^*$.
For more details on the weighted $\dbar$-problem, see \cite{Haslinger2007,Haslinger2014}.

The \emph{complex Laplacian} is the Laplacian of the $\dbar$-complex,
\begin{equation}\label{eq:complex_laplacian_smooth}
 \clapl[0,q][\varphi] \coloneqq \dbar \dbar_\varphi^t + \dbar_\varphi^t \dbar \colon \Omega^{0,q}(\mathbb C^n) \to \Omega^{0,q}(\mathbb C^n).
\end{equation}
This is an elliptic, formally self-adjoint differential operator, and its \emph{Gaffney extension}, which we also denote by $\clapl[0,q][\varphi]$, is then
$$ \clapl[0,q][\varphi] \coloneqq \dbar\dbar_\varphi^* + \dbar_\varphi^*\dbar, $$
understood as an unbounded operator on $L^2_{0,q}(\mathbb C^n,e^{-\varphi})$ with domain
$$ \dom{\clapl[0,q][\varphi]} \coloneqq \big\{ u \in \dom{\dbar} \cap \dom{\dbar^*_\varphi}: \dbar u \in  \dom{\dbar^*_\varphi}\text{ and } \dbar^*_\varphi u \in  \dom{\dbar}\big\}, $$
where $\dbar^*_\varphi$ is the Hilbert space adjoint of $\dbar$ for the inner product~\cref{eq:weighted_inner_prod}, and $\dom{\dbar^*_\varphi}$ is its domain.
It is a positive self-adjoint operator on $L^2_{0,q}(\mathbb C^n,e^{-\varphi})$ by techniques going back to \cite{Gaffney1955}, see also \cite{Haslinger2014}.
By completeness of $\mathbb C^n$, \cref{eq:complex_laplacian_smooth} is essentially self-adjoint, so that the Gaffney extension is its only closed symmetric extension, see for instance \cite{Haslinger2014,Ma2007}

The inverse of the restriction of $\clapl[0,q][\varphi]$ to $\dom{\clapl[0,q][\varphi]}\cap\ker(\clapl[0,q][\varphi])^\perp$ is called the \emph{$\dbar$-Neumann operator} and is denoted by
$$ \dbarN[0,q][\varphi] \coloneqq \big(\clapl[0,q][\varphi]|_{\dom{\clapl[0,q][\varphi]}\cap\ker(\clapl[0,q][\varphi])^\perp}\big)^{-1} \colon \img{\clapl[0,q][\varphi]} \to L^2_{0,q}(\mathbb C^n,e^{-\varphi}). $$
While $\dbarN[0,q][\varphi]$ is initially only defined on $\img{\clapl[0,q][\varphi]}$, we can extend it to a bounded operator on $L^2_{0,q}(\mathbb C^n,e^{-\varphi})$ if and only if $\clapl[0,q][\varphi]$ has closed range.
This is equivalent to $\img{\dbar}\cap L^2_{0,q}(\mathbb C^n,e^{-\varphi})$ and $\img{\dbar}\cap L^2_{0,q+1}(\mathbb C^n,e^{-\varphi})$ both being closed, and also to the existence of $C > 0$ such that
\begin{equation}\label{eq:basic_estimate}
 \|u\|_\varphi^2 \leq C \big(\|\dbar u\|_\varphi^2 + \|\dbar^*_\varphi u\|_\varphi^2\big)
\end{equation}
holds for all $u \in \dom{\dbar} \cap \dom{\dbar^*_\varphi} \cap \ker(\clapl[0,q][\varphi])^\perp$, see the arguments in \cite{Hoermander1965,Straube2010,Berger2015}.
We are interested in spectral properties of $\dbarN[0,q][\varphi]$, in particular the problem of deciding its compactness in terms of easily accessible properties of $\varphi$.
We denote by $\spec{\clapl[0,q][\varphi]}$ and $\essspec{\clapl[0,q][\varphi]}$ the \emph{spectrum} and the \emph{essential spectrum} of $\clapl[0,q][\varphi]$, respectively.
The latter set consists of all points in $\spec{\clapl[0,q][\varphi]}$ which are accumulation points of the spectrum or eigenvalues of infinite multiplicity.
If $\dbarN[0,q][\varphi]$ is a bounded operator, then it follows from general operator theory that its compactness is equivalent to $\essspec{\clapl[0,q][\varphi]} \subseteq \{0\}$.
Equivalently, the spectrum of $\clapl[0,q][\varphi]$ may only consist of discrete eigenvalues of finite multiplicity, except possibly allowing for an infinite dimensional kernel.

One important property of the $\dbar$-Neumann operator lies in the fact that it can be used to compute the norm-minimal solution to the inhomogeneous equation $\dbar u = f$, for a given $\dbar$-closed $(0,q+1)$-form $f$.
Indeed, this solution is then given by $u = \dbar^*_\varphi \dbarN[0,q+1]f$, and many operator theoretic properties such as compactness transfer from $\dbarN$ to the minimal solution operator $\dbar^*_\varphi\dbarN$.
We refer the reader to the literature, for example \cite{Straube2010,Haslinger2014}.

An important tool is the (weighted) \emph{Kohn-Morrey-H\"ormander formula},
\begin{equation}\label{eq:kohn_morrey}
 \|\dbar u\|_\varphi^2 + \|\dbar_\varphi^* u\|_\varphi^2
  = \psum_{|J|=q} \, \sum_{j=1}^n \bigg\|\frac{\partial u_J}{\partial \overline z_j}\bigg\|_\varphi^2
  + \psum_{|K|=q-1} \,\sum_{j,k=1}^n \int_{\mathbb C^n} \frac{\partial^2\varphi}{\partial z_j \partial \overline z_k}\,u_{jK} \overline{u_{kK}}\,e^{-\varphi}d\lambda,
\end{equation}
see \cite{Hoermander1965}, \cite[p.~109]{Haslinger2014}, or \cite[Proposition~2.4]{Straube2010}, which is valid a~priori for $u \in \Omega^{0,q}_c(\mathbb C^n)$ with $1 \leq q \leq n$, and by extension also for $u \in \dom{\dbar} \cap \dom{\dbar_\varphi^*}$, since $\Omega^{0,q}_c(\mathbb C^n)$ is dense in the latter space for the norm $u \mapsto (\|u\|_\varphi^2 + \|\dbar u\|_\varphi^2 + \|\dbar_\varphi^* u\|_\varphi^2)^{1/2}$.
Again, the reason for this is that $\mathbb C^n$ is complete for the Euclidean metric.

\begin{rem}\label{rem:introduction}
 \begin{inlineenum}
  \item
   From \cref{eq:kohn_morrey}, one derives a sufficient condition for boundedness of $\dbarN[0,q][\varphi]$, which is that $\varphi$ be plurisubharmonic, and
   $$ \liminf_{z \to \infty} s_q(z) > 0, $$
   where $s_q(z)$ is the sum of the $q$ smallest eigenvalues of the complex Hessian
   $$ M_\varphi(z) \coloneqq \bigg(\frac{\partial^2\varphi}{\partial z_j\partial\overline z_k}(z)\bigg)_{j,k=1}^n $$
   of $\varphi$.
   W refer to \cite{Haslinger2014} for the details.
   From this condition it also follows that $\clapl[0,q][\varphi]$ is injective, so that $\dbarN[0,q][\varphi]$ is the bounded inverse of $\clapl[0,q][\varphi]$.%
   \footnote{Actually, for the Laplacian to be injective it suffices to have $\varphi$ plurisubharmonic and $M_\varphi(z) \neq 0$ for some $z\in \mathbb C^n$, compare with the arguments in the proof of \cref{dbar_neumann_noncompactness_decoupled_weighted_spaces}.}
  \item
   A sufficient condition for $\dbarN[0,q][\varphi]$ being compact is the following:
   Suppose that $\varphi \in C^2(\mathbb C^n, \mathbb R)$ and that the sum $s_q(z)$ of the smallest $q$ eigenvalues of $M_\varphi(z)$ has the property
   \begin{equation} \label{sq}
    \lim_{z \to \infty} s_q(z)= +\infty.
   \end{equation}
   Then $\dbarN[0,q][\varphi]$ is compact.
   Indeed, \cref{eq:kohn_morrey} together with some linear algebra implies that $\|\dbar u\|^2_\varphi + \|\dbar^*_\varphi u\|^2_\varphi \geq (s_q u,v)_\varphi$ for all $u,v \in \dom{\dbar}\cap\dom{\dbar^*_\varphi} \cap L^2_{0,q}(\mathbb C^n,e^{-\varphi})$, and hence the diverging of $s_q$ implies compactness, see \cite{Haslinger2011,Haslinger2014} or also \cite{Ruppenthal2011a} for the details.
   Further recent results concerning compactness of the $\dbar$-Neumann operator for the weighted problem can be found in \cite{DallAra2014,DallAra2015}.
   
  \item
   Boundedness and compactness of the $\dbar$-Neumann operator ``percolate'' up the $\dbar$-complex: if $\dbarN[0,q][\varphi]$ is bounded or compact, then $\dbarN[0,q+1][\varphi]$ has the same property, see \cite{Haslinger2014}.
 \end{inlineenum}
\end{rem}

\section{Overview of the results}

In this article we will first derive a necessary condition for compactness to hold, which is weaker than \cref{sq}:
Under the assumption that the \emph{weighted Bergman space}
$$ \bergman(\mathbb C^n, e^{-\varphi}) \coloneqq \bigg\{ f\colon\mathbb C^n \to \mathbb C : \text{$f$ entire and } \int_{\mathbb C^n} |f|^2\,e^{-\varphi}d\lambda < \infty \bigg\} $$
has infinite dimension (see \cref{rem:necessary_condition} for a known sufficient condition for this) and $\dbarN[0,q][\varphi]$ is compact for some $1 \leq q \leq n$, then
\begin{equation}\label{eq:result_limsup_trace_at_infty}
 \limsup_{z \to \infty} \trace(M_\varphi (z))=+\infty,
\end{equation}
or, in other words, $\trace(M_\varphi)$ is unbounded.
A stronger result, obtained with different methods, is presented in \cref{sec:schrodinger}:
it says that if $\dbarN[0,q][\varphi]$ is compact (regardless of dimensionality of the Bergman space), then
$$ \lim_{z \to \infty} \int_{B_1(z)} \trace(M_\varphi)^2\,d\lambda = +\infty. $$
Of course, this also implies~\cref{eq:result_limsup_trace_at_infty}.

In \cref{sec:decoupled_weights} we will consider weights which are \emph{decoupled}, i.e., of the form
$$ \varphi (z_1,\dotsc , z_n) = \varphi_1(z_1) + \dotsb + \varphi_n(z_n). $$
Decoupled weights are known from \cite{Haslinger2007} to be an obstruction to compactness of the $\dbar$-Neumann operator on $(0,1)$-forms, provided at least one $\varphi_j$ gives rise to an infinite dimensional weighted Bergman space (of entire functions on $\mathbb C$).
By using results of \cite{Berger2015,Marco2003,Marzo2009}, we will characterize compactness of $\dbarN[0,q][\varphi]$ for this class of weights, under the additional assumption that each $\varphi_j : \mathbb C \to \mathbb R$ is a subharmonic function such that $\Delta\varphi_j$ defines a nontrivial doubling measure.
This means that $\varphi_j$ is not harmonic and there is $C > 0$ such that 
$$\int_{B_{2r}(z)} \Delta\varphi_j\,d\lambda \leq C \int_{B_r(z)} \Delta\varphi_j\,d\lambda$$
for all $z\in \mathbb C$ and $r > 0$.
As an example, $\varphi(z) \coloneqq |z|^\alpha$ for $\alpha>0$ satisfies these conditions.
It turns out that $\dbarN[0,q][\varphi]$ is never compact for $0 \leq q \leq n-1$, and compactness of $\dbarN[0,n][\varphi]$ depends on the asymptotics of the integral $\int_{B_1(z)}\trace(M_\varphi)\,d\lambda$ as $z \to \infty$.
This will also give examples of weights $\varphi$ such that \cref{eq:result_limsup_trace_at_infty} is not sufficient for compactness of $\dbarN[][\varphi]$.

% \ifams\else
%  \paragraph{Acknowledgments}
%  This work was partially supported by the Austrian Science Fund (FWF): P23664.
% \fi

\section{A necessary condition for compactness}

To derive a necessary condition for compactness of $\dbarN[0,q][\varphi]$, we will apply $\clapl[0,q][\varphi]$ to $(0,q)$-forms with coefficients (for the standard basis) belonging to $\bergman(\mathbb C^n, e^{-\varphi})$.
The point is that $\clapl[0,q][\varphi]$ restricts to a multiplication operator on this space.

\begin{thm}\label{suffcompgen}
 Let $\varphi \colon \mathbb C^n \to \mathbb R$ be a plurisubharmonic $C^2$ function and suppose that the corresponding weighted space $ \bergman(\mathbb C^n, e^{-\varphi})$ of entire function is infinite dimensional. If there is $1 \leq q \leq n$ such that the $\dbar$-Neumann operator $\dbarN[0,q][\varphi]$ is compact, then 
 \begin{equation}\label{comptrgen}
  \limsup_{z \to \infty}\trace(M_\varphi (z)) = +\infty.
 \end{equation}
\end{thm}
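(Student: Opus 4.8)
The plan is to reduce to the top degree $q=n$ and then to test the quadratic form of $\clapl[0,n][\varphi]$ against top-forms whose single coefficient is a Bergman function. By the percolation property of compactness up the $\dbar$-complex (\cref{rem:introduction}), if $\dbarN[0,q][\varphi]$ is compact for some $q$, then so is $\dbarN[0,n][\varphi]$; hence it suffices to derive a contradiction from $\limsup_{z\to\infty}\trace(M_\varphi(z)) < +\infty$ under the assumption that $\dbarN[0,n][\varphi]$ is compact. For $f \in \bergman(\mathbb C^n,e^{-\varphi})$ I consider the top-form $u_f \coloneqq f\,d\overline z_1\wedge\dots\wedge d\overline z_n$, which satisfies $\|u_f\|_\varphi = \|f\|_\varphi$ and $\dbar u_f = 0$ for degree reasons. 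Since for a top-form the curvature term in \cref{eq:kohn_morrey}, summed over all $(n-1)$-multiindices, collapses to the full diagonal of $M_\varphi$, that term equals $\int_{\mathbb C^n}\trace(M_\varphi)\,|f|^2 e^{-\varphi}\,d\lambda$; this is the sense in which $\clapl[0,n][\varphi]$ acts by multiplication on such forms.

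Next I establish the energy bound and the triviality of the kernel. Applying \cref{eq:kohn_morrey} to the compactly supported forms $\chi_R u_f$, where $\chi_R$ is a standard cutoff with $|\nabla\chi_R| \lesssim 1/R$, the antiholomorphic-derivative term reduces to $\sum_j\|(\partial\chi_R/\partial\overline z_j)f\|_\varphi^2 \to 0$ (because $f$ is entire), while the curvature term converges monotonically to $\int\trace(M_\varphi)\,|f|^2 e^{-\varphi}\,d\lambda$. As $\Omega^{0,n}_c(\mathbb C^n)$ is a form core for $\clapl[0,n][\varphi]$ and these energies stay bounded, lower semicontinuity of the closed quadratic form $u\mapsto\|\dbar u\|_\varphi^2 + \|\dbar_\varphi^* u\|_\varphi^2$ places $u_f$ in its domain with
\[
 \|\dbar_\varphi^* u_f\|_\varphi^2 \le \int_{\mathbb C^n}\trace(M_\varphi)\,|f|^2 e^{-\varphi}\,d\lambda .
\]
Separately, infinite-dimensionality of $\bergman(\mathbb C^n,e^{-\varphi})$ forces $\varphi$ to be non-pluriharmonic, since a pluriharmonic weight on $\mathbb C^n$ is the real part of an entire function against which the Bergman space is trivial; as $M_\varphi\ge 0$ this makes $\trace(M_\varphi)\not\equiv 0$. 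A harmonic top-form has holomorphic coefficient $g$ with $\int\trace(M_\varphi)|g|^2e^{-\varphi}d\lambda=0$, which then forces $g=0$; hence $\clapl[0,n][\varphi]$ is injective, and since compactness of $\dbarN[0,n][\varphi]$ forces closed range, $0\notin\spec{\clapl[0,n][\varphi]}$.

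Finally I argue by contradiction. Suppose $\trace(M_\varphi)\le T$ on all of $\mathbb C^n$. Choosing an orthonormal sequence $(f_m)$ in the infinite-dimensional Bergman space, the forms $u_m \coloneqq u_{f_m}$ are orthonormal, so $u_m \rightharpoonup 0$, and the bound above gives $\|\dbar_\varphi^* u_m\|_\varphi^2 \le T$. Writing $\mu_m$ for the spectral measure of $\clapl[0,n][\varphi]$ at $u_m$, which is supported in $[\lambda_1,\infty)$ with $\lambda_1>0$ and has total mass $1$, Cauchy–Schwarz yields $(\dbarN[0,n][\varphi]u_m,u_m)_\varphi = \int\lambda^{-1}\,d\mu_m \ge \big(\int\lambda\,d\mu_m\big)^{-1} \ge 1/T$. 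But $\dbarN[0,n][\varphi]$ is compact, so $\dbarN[0,n][\varphi]u_m\to 0$ in norm and $(\dbarN[0,n][\varphi]u_m,u_m)_\varphi\to 0$, a contradiction. Hence $\trace(M_\varphi)$ is unbounded; being continuous, it must be unbounded at infinity, which is \cref{comptrgen}. I expect the main obstacle to be the domain/core step: justifying that $u_f$ lies in the form domain and that the cutoff errors vanish for an \emph{arbitrary} plurisubharmonic weight, whereas the spectral endgame is then routine.
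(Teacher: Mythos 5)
Your proof is correct, but it runs on a genuinely different mechanism than the paper's, so it is worth comparing the two. The paper stays at the given degree $q$ and argues through the \emph{operator} domain: a $(0,q)$-form $g$ with coefficients in $\bergman(\mathbb C^n,e^{-\varphi})$ is $\dbar$-closed, and $\clapl[0,q][\varphi]g$, computed distributionally, is just multiplication of the coefficients by entries of $M_\varphi$; since the maximal distributional extension of the complex Laplacian is symmetric, it coincides with the Gaffney extension, so such $g$ lie in $\dom{\clapl[0,q][\varphi]}$. Under the contradiction hypothesis $\trace(M_\varphi)\le C$, plurisubharmonicity bounds every entry of $M_\varphi$ (the Hilbert--Schmidt norm of a positive matrix is at most its trace), so $\clapl[0,q][\varphi]$ carries the unit ball of this infinite-dimensional space of forms into a bounded set, and writing $g=\dbarN[0,q][\varphi]\clapl[0,q][\varphi]g$ exhibits the identity of that space as compact $\circ$ bounded, a contradiction. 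Your reduction to top degree by percolation is exactly the alternative the paper itself sketches in \cref{rem:necessary_condition}, but from there you diverge: instead of operator-domain membership and factorization of the identity, you work with the closed quadratic form --- cutoffs, the Kohn--Morrey--H\"ormander formula \cref{eq:kohn_morrey}, and lower semicontinuity place $u_f$ in the \emph{form} domain with $\|\dbar_\varphi^*u_f\|_\varphi^2\le\int_{\mathbb C^n}\trace(M_\varphi)|f|^2e^{-\varphi}\,d\lambda$ (note this step is only valid, and only needed, when the right-hand side is finite, i.e.\ under the contradiction hypothesis $\trace(M_\varphi)\le T$; your unconditional-looking statement of the energy bound should be read that way) --- and you then conclude with an orthonormal sequence, the spectral estimate $(\dbarN[0,n][\varphi]u_m,u_m)_\varphi\ge 1/T$, and weak convergence against compactness. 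Each route buys something. Yours avoids the essential-self-adjointness/maximal-extension input and needs only form-domain membership, which is weaker and easier to verify; more significantly, it repairs a point the paper elides: in general $\dbarN[0,q][\varphi]\clapl[0,q][\varphi]=I-P_{\ker(\clapl[0,q][\varphi])}$, so the paper's identity $\dbarN[0,q][\varphi]\clapl[0,q][\varphi]g=g$ tacitly requires $g\perp\ker(\clapl[0,q][\varphi])$, which is never checked there, whereas your kernel-triviality step (nontrivial Bergman space $\Rightarrow$ $\varphi$ not pluriharmonic $\Rightarrow$ $\trace(M_\varphi)>0$ on an open set, then \cref{eq:kohn_morrey} plus the identity theorem kill harmonic top forms) settles precisely this issue at top degree. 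The paper's route, in exchange, is shorter, treats all degrees $q$ simultaneously without invoking the percolation theorem of \cref{rem:introduction}, and requires no cutoff/lower-semicontinuity analysis.
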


\begin{proof}
 Let $g$ be a smooth $(0,q)$-form, $g = \psum_{|J|=q} g_J\,d\overline z_J  \in  \Omega^{0,q}(\mathbb C^n)$.
 If the coefficients $g_J$ of $g$ are holomorphic, then $g \in \ker(\dbar \colon \Omega^{0,q}(\mathbb C^n) \to \Omega^{0,q+1}(\mathbb C^n))$, and $\clapl[0,q][\varphi]g$ as in \cref{eq:complex_laplacian_smooth} is given by
 \begin{align*}
 \dbar \dbar_\varphi^t g & = \dbar \bigg( -\psum_{|K|=q-1}\, \sum_{k=1}^n \bigg(\frac{\partial}{\partial z_k}- \frac{\partial \varphi}{\partial z_k} \bigg) g_{kK}\, d\overline z_K \bigg)\\
   & = - \sum_{j=1}^n \, \psum_{|K|=q-1}\, \sum_{k=1}^n
          \bigg( \frac{\partial^2}{\partial \overline z_j \partial z_k} -\frac{\partial \varphi}{\partial z_k}
          \frac{\partial}{\partial \overline z_j} - \frac{\partial^2 \varphi}{\partial \overline z_j \partial z_k}
          \bigg) g_{kK}\, d\overline z_j \wedge d\overline z_K,
 \end{align*}
 which reduces to
 \begin{equation}\label{compgen}
  \psum_{|K|=q-1} \, \sum_{j,k=1}^n  \frac{\partial^2 \varphi}{\partial \overline z_j \partial z_k} g_{kK}\, d\overline z_j \wedge d\overline z_K,
 \end{equation}
 see \cite{Haslinger2014} for the computation of $\dbar_\varphi^t$.
 If $\widetilde\square^\varphi_{0,q}$ denotes the maximal closed extension of \cref{eq:complex_laplacian_smooth} to an operator on $L^2_{0,q}(\mathbb C^n,e^{-\varphi})$, defined in the sense of distributions on % TODO: change these if \clapl gets changed
 $$ \dom{\widetilde\square^\varphi_{0,q}} \coloneqq \big\{u \in L^2_{0,q}(\mathbb C^n,e^{-\varphi}) : \clapl[0,q][\varphi]u \in L^2_{0,q}(\mathbb C^n,e^{-\varphi})\big\}, $$
 then $\widetilde\square^\varphi_{0,q}$ is symmetric and hence must equal the Gaffney extension.

 Now assume that \cref{comptrgen} is wrong, so that there exists $C\geq 0$ such that
 \begin{equation}\label{contrary}
  \trace(M_\varphi (z)) \le C
 \end{equation}
 for all $z \in \mathbb C^n$.
 Since $\varphi$ is assumed to be plurisubharmonic, $M_\varphi$ is nonnegative and hence \cref{contrary} is equivalent to all entries of $M_\varphi$ being bounded.
 Indeed, $\trace(M_\varphi^* M_\varphi) = \trace(M_\varphi^2) \leq \trace(M_\varphi)^2$, and the Hilbert-Schmidt norm $A \mapsto \trace(A^*A)^{1/2}$ on the space of complex $n\times n$ matrices is equivalent to the max norm (i.e., the maximum of the entries).
 It now follows from \cref{compgen} that all $(0,q)$-forms $g$ with coefficients in $\bergman(\mathbb C^n, e^{-\varphi})$ belong to $\dom{\widetilde\square^\varphi_{0,q}} = \dom{\clapl[0,q][\varphi]}$, and
 $$ \clapl[0,q][\varphi] g = \psum_{|K|=q-1} \, \sum_{j,k=1}^n  \frac{\partial^2 \varphi}{\partial \overline z_j \partial z_k}
 g_{kK}\, d\overline z_j \wedge d\overline z_K.$$
 Hence, if all such $g$ belong to a bounded set of $L^2_{(0,q)}( \mathbb C^n, e^{-\varphi}),$ the image under $\clapl[0,q][\varphi] $ of this bounded set will be again bounded in $L^2_{(0,q)}( \mathbb C^n, e^{-\varphi}).$ In addition we have 
 $$ \dbarN[0,q][\varphi]\Bigg(\psum_{|K|=q-1} \, \sum_{j,k=1}^n  \frac{\partial^2 \varphi}{\partial \overline z_j \partial z_k}
 g_{kK}\, d\overline z_j \wedge d\overline z_K\Bigg) = g, $$
 so that the identity on the (infinite dimensional) space of $(0,q)$-forms with coefficients in $\bergman(\mathbb C^n,e^{-\varphi})$ is the composition of the compact operator $\dbarN[0,q][\varphi]$ and a bounded operator, hence it is compact.
 This contradicts $\dim{\bergman(\mathbb C^n,e^{-\varphi})} = \infty$.
\end{proof}

\begin{rem}\label{rem:necessary_condition}
 \begin{inlineenum}
  \item
  To obtain \cref{suffcompgen}, one could also use the fact that compactness of $\dbarN[0,q][\varphi]$ ``percolates up'' the weighted $\dbar$-complex, see \cite{Haslinger2014}: if $1 \leq q \leq n-1$ and $\dbarN[\varphi][0,q]$ is compact, then $\dbarN[\varphi][0,q+1]$ is also compact.
  At the upper end of the $\dbar$-complex, one has
  \begin{align*}
   \clapl[0,n][\varphi] g
    & = \dbar \dbar^t_\varphi g \\
    & = \dbar \bigg(- \psum_{|K|=n-1} \,\sum_{k=1}^n \bigg( \frac{\partial}{\partial z_k} - \frac{\partial \varphi}{\partial z_k} \bigg) g_{kK}\, d\overline z_K \, \bigg) \\
    & = -\frac{1}{4} \Delta g + \sum_{j=1}^n \frac{\partial \varphi}{\partial z_j} \frac{\partial g}{\partial \overline z_j} + \frac{1}{4}(\Delta \varphi) g, \numberthis\label{zeron}
  \end{align*}
  for $g \in \Omega^{0,n}(\mathbb C^n)$, where we identify the $(0,n)$-form
  $$  g= \widetilde g \, d\overline z_1 \wedge \dotsb \wedge d\overline z_n \in \Omega^{0,n}(\mathbb C^n) $$ 
  with its coefficient $\widetilde g \in C^\infty(\mathbb C^n)$.
  If we consider $(0,n)$-forms $g$ with coefficient belonging to $ \bergman(\mathbb C^n, e^{-\varphi}),$
  then $g \in \Omega^{0,n}(\mathbb C^n) \cap L^2_{0,n}(\mathbb C^n,e^{-\varphi})$ and, by \cref{zeron}, we have
  $$ \clapl[0,n][\varphi] g =  \frac{1}{4}(\Delta \varphi) g=  \trace(M_\varphi) g, $$
  and one can now use the same argument as before to reach a contradiction.

  \item
  Let $n=1$ and suppose that $\varphi \colon \mathbb C \to \mathbb R$ is a subharmonic $C^2$ function such that the Bergman space $\bergman(\mathbb C, e^{-\varphi})$ is infinite dimensional.
  The complex Laplacian on $(0,1)$-forms equals $\clapl[0,1][\varphi] = \dbar\dbar_\varphi^*$, and the necessary condition \cref{comptrgen} for compactness of the $\dbar$-Neumann operator becomes
  \begin{equation}\label{eq:rem_necessary_condition_limsup}
   \limsup_{z\to \infty} \Delta \varphi (z)=+\infty.
  \end{equation}
  If one supposes that $  \Delta \varphi $ defines a doubling measure, the condition 
  \begin{equation}\label{eq:rem_necessary_condition_averages_diverging}
   \lim_{z \to \infty} \int_{B_1(z)} \Delta\varphi\,d\lambda = +\infty
  \end{equation}
  is known to be both necessary and sufficient for compactness of the $\ovprt$-Neumann operator, see \cite{Haslinger2007} and \cite{Marzo2009}.
  Of course, \cref{eq:rem_necessary_condition_averages_diverging} implies \cref{eq:rem_necessary_condition_limsup}, as it should.
  
  \item
  A sufficient condition for the Bergman space $\bergman(\mathbb C^n,e^{-\varphi})$ to have infinite dimension is given in \cite[Lemma~3.4]{Shigekawa1991}:
  If $\varphi \colon \mathbb C^n \to \mathbb R$ is a smooth function such that
  \begin{equation}\label{eq:shigekawa_bergman_inf_dim}
   \lim_{z\to \infty} |z|^2 s_1 (z)=+\infty,
  \end{equation}
  where $s_1$ is the smallest eigenvalue of $M_\varphi$, then $\bergman(\mathbb C^n,e^{-\varphi})$ has infinite dimension.
  Condition \cref{eq:shigekawa_bergman_inf_dim} is not sharp: the function
  $$ \varphi (z_1,z_2)= |z_1|^4 + |z_1 z_2|^2 $$
  on $\mathbb C^2$ does not satisfy \cref{eq:shigekawa_bergman_inf_dim}.
  Nevertheless, the corresponding space $\bergman(\mathbb C^2, e^{-\varphi})$ is of infinite dimension since it contains all polynomials in $z_1$, see the computation in \cite[p.~125]{Haslinger2014}.
 \end{inlineenum}
\end{rem}

\section{A stronger result obtained from Schr\"odinger operator theory}\label{sec:schrodinger}

Let $u=u\, d\ovli z_1\wedge \dots \wedge d\ovli z_n$ be a smooth $(0,n)$-form belonging to the domain of $\clapl[0,n][\varphi].$ For $1 \leq j \leq n$ denote by $K_j$ the increasing multiindex $K_j \coloneqq (1,\dotsc,j-1,j+1,\dotsc,n)$ of length $n-1$.\
Then
$$ \dbar_\varphi^t u = \sum_{j=1}^n (-1)^{j+1} \bigg(\frac{\partial \varphi}{\partial z_j}u-\frac{\partial u}{\partial z_j}\bigg)\,d\ovli z_{K_j}.$$
Hence 
\begin{align*}
 \dbar \dbar_\varphi^t u
  &= \Bigg[\sum_{j=1}^n \frac{\partial }{\partial \ovli z_j} \bigg(\frac{\partial \varphi}{\partial z_j}u-\frac{\partial u}{\partial z_j}\bigg)\Bigg] d\ovli z_1 \wedge \dots \wedge d\ovli z_n\\
  &= \Bigg[\sum_{j=1}^n  \bigg(\frac{\partial^2\varphi}{\partial z_j\partial \ovli z_j}u+
      \frac{\partial \varphi}{\partial z_j}
      \frac{\partial u}{\partial \ovli z_j} -
      \frac{\partial^2 u}{\partial z_j\partial \ovli z_j}
      \bigg)\Bigg] d\ovli z_1 \wedge \dots \wedge d\ovli z_n.
\end{align*}
Conjugation with the unitary operator $L^2(\mathbb C^n,e^{-\varphi}) \to L^2(\mathbb C^n)$ of multiplication by $e^{-\varphi/2}$ gives
\begin{equation}\label{eq:complex_laplacian_smooth_n1}
e^{-\varphi/2}  \clapl[0,n][\varphi] e^{\varphi/2}f = \sum_{j=1}^n \bigg(
-\frac{\partial^2 f}{\partial z_j\partial \ovli z_j}-\frac{1}{2} \frac{\partial \varphi}{\partial \ovli z_j}
\frac{\partial f}{\partial z_j} + \frac{1}{2} \frac{\partial \varphi}{\partial z_j}
\frac{\partial f}{\partial \ovli z_j} + \frac{1}{4} \frac{\partial \varphi}{\partial \ovli z_j}
\frac{\partial \varphi}{\partial z_j} f + \frac{1}{2}\frac{\partial^2 \varphi}{\partial z_j\partial \ovli z_j}f \bigg),
\end{equation}
where $f\in L^2(\mathbb C^n)$ and we just wrote down the coefficient of the corresponding $(0,n)$-form.
This operator can be expressed by real variables in the form
\begin{equation}\label{eq:complex_laplacian_smooth_n2}
 e^{-\varphi/2}  \clapl[0,n][\varphi] e^{\varphi/2}f = \frac{1}{4} (-\Delta_A +V)f,
\end{equation}
with
$$ \Delta_A \coloneqq \sum_{j=1}^n \Bigg[\bigg( -\frac{\partial}{\partial x_j} - \frac{i}{2} \frac{\partial \varphi}{\partial y_j} \bigg)^2
+  \bigg( -\frac{\partial}{\partial y_j} + \frac{i}{2} \frac{\partial \varphi}{\partial x_j} \bigg)^2 \Bigg], $$
and $V \coloneqq 2\trace(M_\varphi)$.
It follows that $-\Delta_A +V$ is a Schr\"odinger operator on $L^2(\mathbb R^{2n})$ with magnetic vector potential
$$ A \coloneqq \frac{1}{2} \bigg(-\frac{\partial \varphi}{\partial y_1}, \frac{\partial \varphi}{\partial x_1}, \dotsc, -\frac{\partial \varphi}{\partial y_n}, \frac{\partial \varphi}{\partial x_n}\bigg), $$
where $z_j=x_j+iy_j$, $j=1,\dotsc,n$, and non-negative electric potential $V$ in the case where $\varphi$ is plurisubharmonic.
Note that $A$ corresponds to the real $1$-form $-\frac{i}{2}(\partial - \dbar)\varphi$, and the associated magnetic field is $dA = i\partial\dbar \varphi$.
By essential self-adjointness, the Gaffney extension of $\clapl[0,n][\varphi]$ agrees with the unique self-adjoint extension of the Schr\"odinger operator.
We can now apply a result of Iwatsuka~\cite{Iwatsuka1986} to obtain:

\begin{thm}\label{weighted_problem_discrete_spectrum}
 Let $\varphi \colon \mathbb C^n \to \mathbb R$ be smooth and plurisubharmonic.
 Then $\clapl[0,n][\varphi]$ has compact resolvent on $L^2_{0,n}(\mathbb C^n,e^{-\varphi})$ if and only if the magnetic Schr\"odinger operator $-\Delta_A + V$ has compact resolvent on $L^2(\mathbb R^{2n})$.
 In particular, if there is $1 \leq q \leq n$ such that $\clapl[0,q][\varphi]$ has compact resolvent, then
 \begin{equation}\label{eq:weighted_problem_discrete_spectrum_lim_of_average_square}
  \lim_{z \to \infty} \int_{B(z,1)} \trace(M_\varphi)^2\,d\lambda = +\infty
 \end{equation}
\end{thm}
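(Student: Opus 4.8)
The first assertion (the equivalence of compact resolvents) I would treat as an immediate consequence of the unitary equivalence already set up above. The plan is to let $U\colon L^2(\Cplx^n,e^{-\varphi})\to L^2(\Cplx^n)\cong L^2(\R^{2n})$ be the unitary multiplication operator $f\mapsto e^{-\varphi/2}f$, and to identify a $(0,n)$-form with its single coefficient. Then~\eqref{eq:complex_laplacian_smooth_n2} says precisely that $U\,\clapl[0,n][\varphi]\,U^{-1}=\tfrac14(-\Delta_A+V)$ on the common core of smooth compactly supported functions. Since both sides are essentially self-adjoint and $U$ carries one core onto the other, their closures are unitarily equivalent; as having compact resolvent is a unitary invariant and is unaffected by the positive scalar factor $\tfrac14$, it follows that $\clapl[0,n][\varphi]$ has compact resolvent if and only if $-\Delta_A+V$ does.

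For the ``in particular'' statement I would first reduce to the top degree $q=n$. I may assume $M_\varphi\not\equiv 0$: otherwise $\varphi$ is pluriharmonic, so $V\equiv 0$ and $dA=i\partial\dbar\varphi\equiv 0$, and the conjugated operator is gauge-equivalent to the free Laplacian on $\R^{2n}$, which has no compact resolvent — contradicting the hypothesis via the equivalence just proved. Assuming $M_\varphi\not\equiv 0$, the Laplacians $\clapl[0,q][\varphi]$ are injective by the footnote to Remark~\ref{rem:introduction}. Now if $\clapl[0,q][\varphi]$ has compact resolvent, then $\essspec{\clapl[0,q][\varphi]}=\emptyset$, so $\dbarN[0,q][\varphi]$ is compact; compactness percolates up the complex (Remark~\ref{rem:introduction}), giving that $\dbarN[0,n][\varphi]$ is compact, i.e.\ $\essspec{\clapl[0,n][\varphi]}\subseteq\{0\}$. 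Together with injectivity of $\clapl[0,n][\varphi]$, which removes a possible kernel at $0$, this upgrades to $\essspec{\clapl[0,n][\varphi]}=\emptyset$, so $\clapl[0,n][\varphi]$ has compact resolvent.

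The final step is to feed $-\Delta_A+V$ (now with compact resolvent, by the first part) into the necessary-condition direction of Iwatsuka's theorem~\cite{Iwatsuka1986} for magnetic Schr\"odinger operators with $V\geq 0$, which forces the local averages $\int_{B(x,1)}(|dA|^2+V)\,d\lambda$ to tend to $+\infty$. I would then rewrite this via linear algebra: since $dA=i\partial\dbar\varphi$ has squared pointwise norm $\sum_{j,k}|\partial^2\varphi/\partial z_j\partial\overline z_k|^2=\trace(M_\varphi^2)$, and $M_\varphi\geq 0$ gives $\tfrac1n\trace(M_\varphi)^2\leq\trace(M_\varphi^2)\leq\trace(M_\varphi)^2$, while $V=2\trace(M_\varphi)\leq 2\trace(M_\varphi)^2+\tfrac12$, the averages of $|dA|^2+V$ and of $\trace(M_\varphi)^2$ are comparable up to an additive constant on each unit ball. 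Divergence of the former then yields $\int_{B(z,1)}\trace(M_\varphi)^2\,d\lambda\to+\infty$, which is~\eqref{eq:weighted_problem_discrete_spectrum_lim_of_average_square}.

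The hard part will be pinning down the precise form of Iwatsuka's criterion and verifying that its hypotheses (smoothness of $A$ and $V$, positivity of $V$) hold here, so that the necessary condition genuinely produces the \emph{squared} magnetic-field average $\int|dA|^2$ rather than a lower power; this is exactly what supplies the square in~\eqref{eq:weighted_problem_discrete_spectrum_lim_of_average_square}. A secondary delicate point is the degree reduction: one must confirm that compact \emph{resolvent} at level $q$ is recovered at level $n$, for which the injectivity coming from $M_\varphi\not\equiv 0$ is essential, since compactness of $\dbarN[0,n][\varphi]$ alone would permit an infinite-dimensional kernel and hence $0\in\essspec{\clapl[0,n][\varphi]}$.
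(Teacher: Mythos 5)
Your proposal is correct and follows essentially the same route as the paper's proof: the unitary equivalence \cref{eq:complex_laplacian_smooth_n2} for the equivalence of compact resolvents, percolation of compactness up to top degree, and then Iwatsuka's necessary condition $\int_{B(z,1)}\big(|B|^2+V\big)\,d\lambda\to\infty$ with $|B|^2=\trace(M_\varphi^2)$, translated into \cref{eq:weighted_problem_discrete_spectrum_lim_of_average_square} via $\trace(M_\varphi^2)\leq\trace(M_\varphi)^2$ for $M_\varphi\geq 0$. The only differences are minor: the paper discards the lower-order term $2\trace(M_\varphi)$ by H\"older's inequality and monotonicity of $t\mapsto t+t^{1/2}$ rather than your pointwise bound $2t\leq 2t^2+\tfrac12$, and your explicit treatment of the kernel at $0$ (injectivity of $\clapl[0,n][\varphi]$ when $M_\varphi\not\equiv 0$, with the pluriharmonic case excluded separately) is in fact more careful than the paper, which passes from compactness of $\dbarN[0,n][\varphi]$ to compact resolvent of $\clapl[0,n][\varphi]$ by citing only the percolation remark.
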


\begin{proof}
 It remains to prove the last assertion.
 If $\clapl[0,q][\varphi]$ has compact resolvent, then so does $\clapl[0,n][\varphi]$, see \cref{rem:introduction},
 and if the magnetic Schr\"odinger operator has compact resolvent, then
 \begin{equation}\label{eq:weighted_problem_comp_resolv_necessary}
  \int_{B(z,1)} \big(|M_\varphi|^2 + 2\trace(M_\varphi)\big)d\lambda \to \infty
 \end{equation}
 as $z \to \infty$ by \cite[Theorem~5.2]{Iwatsuka1986}, where $|B|^2 \coloneqq \sum_{j,k=1}^n |B_{j,k}|^2$, since the magnetic field is $i\partial\dbar\varphi$.
 Because $M_\varphi \geq 0$, the above is equivalent to $\int_{B(z,1)} \big(\trace(M_\varphi)^2 + \trace(M_\varphi)\big)\,d\lambda \to \infty$ as $z \to \infty$.
 By H\"older's inequality,
 $$ \int_{B(z,1)} \trace(M_\varphi)\,d\lambda \leq |B(z,1)|^{1/2}\bigg(\int_{B(z,1)}\trace(M_\varphi)^2 d\lambda\bigg)^{1/2}, $$
 and we see that \cref{eq:weighted_problem_comp_resolv_necessary} implies $f\big(\int_{B(z,1)}\trace(M_\varphi)^2\,d\lambda \big) \to \infty$ as $z \to \infty$, where $f \colon [0,\infty) \to [0,\infty)$ is $f(t) \coloneqq t + t^{1/2}$.
 Because $f$ is a monotone bijection, this is equivalent to $\int_{B(z,1)}\trace(M_\varphi)^2\,d\lambda \to \infty$ as $z \to \infty$.
\end{proof}

\begin{rem}\label{rem:schrodinger}
 \begin{inlineenum}
  \item
   The above condition \cref{eq:weighted_problem_discrete_spectrum_lim_of_average_square} is not a sufficient condition for compactness of $\dbarN[0,q][\varphi]$ in general, see \cref{rem:decoupled} below.
 
  \item
   If $\varphi \colon \Cplx^n \to \R$ is such that $\trace(M_\varphi)$ satisfies the reverse H\"older condition,
   $$ \bigg(\fint_B \trace(M_\varphi)^r \,d\lambda\bigg)^{1/r} \leq C \fint_B \trace(M_\varphi)\,d\lambda $$
   for some $r \geq 2$, some $C > 0$, and all balls $B \subseteq \Cplx^n$, where $\fint_B$ denotes the average over $B$ for the Lebesgue measure, then H\"older's inequality implies that \cref{eq:weighted_problem_discrete_spectrum_lim_of_average_square} can be replaced by the formally weaker condition of
   \begin{equation}
    \lim_{z \to \infty} \int_{B(z,1)} \trace(M_\varphi)\,d\lambda = +\infty.
   \end{equation}
   The class of functions satisfying one of these reverse H\"older conditions equals $A_\infty \coloneqq \bigcup_{p\geq 1} A_p$, where $A_p$ are the \emph{Muckenhoupt classes}, see \cite[Theorem~3, p.~212]{Stein1993}.
   Every positive polynomial belongs to $A_\infty$.
   In fact, $|P|^a \in A_p$ for $p > 1$ if $-1 < ad < p-1$, where $d$ is the degree of $P$, see \cite[6.5, p~219]{Stein1993}.
 \end{inlineenum}
\end{rem}

\section{Decoupled weights}\label{sec:decoupled_weights}

We will now consider weight functions $\varphi$ which are \emph{decoupled}, meaning that
\begin{equation}\label{eq:decoupled_weights}
 \varphi(z) = \varphi_1(z_1) + \dotsb + \varphi_n(z_n)
\end{equation}
for functions $\varphi_j \colon \mathbb C \to \mathbb R$.
Then
\begin{equation}\label{eq:spectrum_of_decoupled}\
 \spec{\clapl[0,q][\varphi]} = \bigcup_{\substack{q_1+\dotsb+q_n=q \\ q_j \in \{0,1\}}} \big(\spec{\clapl[0,q_1][\varphi_1]} + \dotsb + \spec{\clapl[0,q_n][\varphi_n]}\big)
\end{equation}
and
\begin{equation}\label{eq:essspec_of_decoupled}
 \essspec{\clapl[0,q][\varphi]} = \bigcup_{\substack{q_1+\dotsb+q_n=q \\ q_j \in \{0,1\}}} \; \bigcup_{j=1}^n \bigg(\essspec{\clapl[0,q_j][\varphi_j]} + \sum_{k \neq j} \spec{\clapl[0,q_k][\varphi_k]} \bigg).
\end{equation}
Here, $\clapl[0,q_j][\varphi_j]$ denotes the complex Laplacian for the weight $\varphi_j$ (of one variable) on $\mathbb C$.
This can be seen as a special case of \cite[Theorem~5.5]{Berger2015}.
Indeed, the $\dbar$-operator acting on $L^2_{0,q}(\mathbb C^n,e^{-\varphi})$ can be understood geometrically as the $\dbar^E$-operator for the trivial line bundle $E \coloneqq \mathbb C^n \times \mathbb C \to \mathbb C^n$, and with Hermitian metric $\langle (z,v_1),(z,v_2)\rangle \coloneqq v_1 \ol{v_2}\,e^{-\varphi(z)}$ on $E$.
If now $E_j$ is $\mathbb C \times \mathbb C \to \mathbb C$ with metric given by $\varphi_j \colon \mathbb C \to \mathbb R$, then it is easy to see that under the isomorphism
$$ (\mathbb C^n \times \mathbb C \to \mathbb C) \cong \pi_1^*E_1 \otimes \dotsb \otimes \pi_n^* E_n $$
the trivial line bundle on $\mathbb C^n$ carries the metric given by \cref{eq:decoupled_weights}, where $\pi_j \colon \mathbb C^n \to \mathbb C$ is the projection onto the $j$th factor.
For the details on the general definition of $\dbar^E$ and its properties, we refer to \cite{Wells1973,Ma2007,Huybrechts2005,Demailly2012}.
We note that equation~\cref{eq:spectrum_of_decoupled} has also appeared in \cite{Chakrabarti2010}.

In the following \cref{dbar_neumann_noncompactness_decoupled_weighted_spaces}, we will consider the case where all $\Delta\varphi_j$ define nontrivial doubling measures.
It is known from \cite{Marzo2009} (or \cite[Theorem~2.3]{Haslinger2007}, with slightly stronger assumptions) that, under these conditions, $\dbarN[0,1][\varphi_j]$ (for the one-variable weights) is compact if and only if
\begin{equation}\label{eq:weighted_spaces_condition_compactness_doubling}
 \lim_{z \to \infty} \int_{B_1(z)} \Delta\varphi_j\,d\lambda = +\infty.
\end{equation}
holds.
Using this condition and the above results, we can characterize compactness of the $\dbar$-Neumann operator in terms of the decoupled weight:

\begin{thm}\label{dbar_neumann_noncompactness_decoupled_weighted_spaces}
 Let $\varphi_j \in C^2(\mathbb C,\mathbb R)$ for $1 \leq j \leq n$ with $n \geq 2$, and set $\varphi(z_1,\dotsc,z_n) \coloneqq \varphi_1(z_1) + \dotsb + \varphi_n(z_n)$.
 Assume that all $\varphi_j$ are subharmonic and such that $\Delta\varphi_j$ defines a nontrivial doubling measure, then
 \begin{enumerate}
  \item\label{item:dbar_neumann_noncompactness_decoupled_weighted_spaces_bergman_space_dim}
   $\dim{\ker(\clapl[0,0][\varphi])} = \dim{\bergman(\mathbb C^n,e^{-\varphi})} = \infty$,
  \item\label{item:dbar_neumann_noncompactness_decoupled_weighted_spaces_trivial_kernel}
   $\ker(\clapl[0,q][\varphi]) = 0$ for $q \geq 1$,
  \item\label{item:dbar_neumann_noncompactness_decoupled_weighted_spaces_bounded}
   $\dbarN[0,q][\varphi]$ is bounded for $0 \leq q \leq n$,
  \item\label{item:dbar_neumann_noncompactness_decoupled_weighted_spaces_bot}
   $\dbarN[0,q][\varphi]$ with $0 \leq q \leq n-1$ is not compact, and
  \item\label{item:dbar_neumann_noncompactness_decoupled_weighted_spaces_top}
   $\dbarN[0,n][\varphi]$ is compact if and only if
   \begin{equation}\label{eq:dbar_neumann_compactness_char_decoupled_top}
    \lim_{z \to \infty} \int_{B_1(z)} \trace(M_\varphi)\,d\lambda = \infty.
   \end{equation}
 \end{enumerate}
\end{thm}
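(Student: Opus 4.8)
The plan is to reduce all five assertions to the one-variable operators $\clapl[0,q_j][\varphi_j]$ via the spectral decompositions \cref{eq:spectrum_of_decoupled} and \cref{eq:essspec_of_decoupled}, feeding in three facts about a single subharmonic weight $\varphi_j$ whose Laplacian is a nontrivial doubling measure: (a) the Bergman space $\bergman(\mathbb C,e^{-\varphi_j})$ is infinite dimensional; (b) $\dbarN[0,1][\varphi_j]$ is bounded, which is the closed-range property for doubling weights from \cite{Marco2003,Marzo2009}, and together with injectivity of $\clapl[0,1][\varphi_j]$ (the $n=1$ case of the argument for (ii) below) gives $\spec{\clapl[0,1][\varphi_j]}\subseteq[\delta_j,\infty)$ for some $\delta_j>0$; and (c) by \cite{Marzo2009}, $\dbarN[0,1][\varphi_j]$ is compact, equivalently $\essspec{\clapl[0,1][\varphi_j]}=\emptyset$, exactly when \cref{eq:weighted_spaces_condition_compactness_doubling} holds. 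Throughout I use that for a decoupled weight $M_\varphi$ is diagonal, so $\trace(M_\varphi)(z)=\tfrac14\sum_{j=1}^n\Delta\varphi_j(z_j)$.

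For (ii) I would argue directly, as hinted in the footnote to \cref{rem:introduction}. If $g\in\ker\clapl[0,q][\varphi]$ with $q\geq 1$, then $\dbar g=0=\dbar_\varphi^*g$, and the Kohn-Morrey-H\"ormander identity \cref{eq:kohn_morrey} forces both terms on its right-hand side to vanish. The derivative terms show every coefficient $g_J$ is holomorphic, and the curvature term, which for a decoupled weight reduces to $\sum_{|K|=q-1}\sum_{j}\tfrac14\int\Delta\varphi_j\,|g_{jK}|^2 e^{-\varphi}\,d\lambda$, must vanish summand by summand; since each $\Delta\varphi_j\geq 0$ is continuous and not identically zero, it is positive on an open set, where a nonzero holomorphic $g_{jK}$ cannot vanish on a set of positive measure, so every $g_{jK}=0$ and hence $g=0$. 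For (i), I use that $\ker\clapl[0,0][\varphi]=\bergman(\mathbb C^n,e^{-\varphi})$ and exhibit infinitely many independent elements: fix nonzero $f_j\in\bergman(\mathbb C,e^{-\varphi_j})$ for $j\geq 2$ and let $f_1$ range over the infinite-dimensional space $\bergman(\mathbb C,e^{-\varphi_1})$; each product $f_1(z_1)f_2(z_2)\cdots f_n(z_n)$ lies in $\bergman(\mathbb C^n,e^{-\varphi})$ because the weighted $L^2$ norm factorizes.

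For boundedness (iii) and lower-degree non-compactness (iv) I turn to the spectral formulas. Since every $\clapl[0,q_j][\varphi_j]$ is nonnegative, since $\spec{\clapl[0,1][\varphi_j]}\subseteq[\delta_j,\infty)$ by (b), and since $\spec{\clapl[0,0][\varphi_j]}\subseteq\{0\}\cup[\delta_j,\infty)$ (its nonzero part equals $\spec{\clapl[0,1][\varphi_j]}$), formula \cref{eq:spectrum_of_decoupled} gives $\spec{\clapl[0,q][\varphi]}\subseteq\{0\}\cup[\delta,\infty)$ with $\delta=\min_j\delta_j>0$ (and even $\subseteq[\delta,\infty)$ for $q\geq 1$); thus $0$ is isolated or absent, $\clapl[0,q][\varphi]$ has closed range, and $\dbarN[0,q][\varphi]$ is bounded for all $q$. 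For $0\leq q\leq n-1$ I produce a strictly positive point of $\essspec{\clapl[0,q][\varphi]}$ from \cref{eq:essspec_of_decoupled}: choose a decomposition with $q_j\in\{0,1\}$ and an index $j^\ast$ with $q_{j^\ast}=0$ (possible as $q\leq n-1$), let this factor contribute $0\in\essspec{\clapl[0,0][\varphi_{j^\ast}]}$, and make the remaining sum positive — by a positive value of some $\spec{\clapl[0,1][\varphi_k]}$ if $q\geq 1$, and by a positive value of some $\spec{\clapl[0,0][\varphi_k]}$ with $k\neq j^\ast$ if $q=0$ (this is where $n\geq 2$ enters). In every case $\essspec{\clapl[0,q][\varphi]}\not\subseteq\{0\}$, so $\dbarN[0,q][\varphi]$ is not compact.

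The decisive step is (v). At $q=n$ the only admissible decomposition is $q_1=\dots=q_n=1$, so \cref{eq:essspec_of_decoupled} reads
\[ \essspec{\clapl[0,n][\varphi]}=\bigcup_{j=1}^n\Big(\essspec{\clapl[0,1][\varphi_j]}+\sum_{k\neq j}\spec{\clapl[0,1][\varphi_k]}\Big). \]
By (b) the inner sums lie in $[\sum_{k\neq j}\delta_k,\infty)$, so the $j$-th term is either empty (when $\essspec{\clapl[0,1][\varphi_j]}=\emptyset$) or contained in $(0,\infty)$ and bounded away from $0$; hence $\essspec{\clapl[0,n][\varphi]}\subseteq\{0\}$ forces it to be empty, which by (c) happens precisely when \cref{eq:weighted_spaces_condition_compactness_doubling} holds for every $j$. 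It remains to show this conjunction of $n$ one-variable conditions is equivalent to \cref{eq:dbar_neumann_compactness_char_decoupled_top}, and I expect this analytic equivalence to be the main obstacle. Writing $\int_{B_1(z)}\trace(M_\varphi)\,d\lambda=\tfrac14\sum_j\int_{B_1(z)}\Delta\varphi_j(\zeta_j)\,d\lambda(\zeta)$ and using Fubini, the $j$-th summand is comparable, up to dimensional constants, to $\int_{B_r(z_j)}\Delta\varphi_j\,d\lambda$ for a fixed radius $r$ — bounded below by restricting to a polydisc inside $B_1(z)$, and above by bounding the transverse slice volume. Since $z\to\infty$ in $\mathbb C^n$ iff $\max_j|z_j|\to\infty$, and since for a doubling measure the divergence of $\int_{B_r(w)}\Delta\varphi_j$ as $w\to\infty$ is independent of $r>0$, the full integral diverges once every one-variable average does (apply the estimate at the maximal coordinate). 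Conversely, if some $\varphi_{j_0}$ fails \cref{eq:weighted_spaces_condition_compactness_doubling}, testing along points escaping to infinity in the single coordinate $j_0$ isolates $\Delta\varphi_{j_0}$ while the continuous $\Delta\varphi_k$ ($k\neq j_0$) stay bounded on the relevant slice, so the full integral stays bounded. This yields (v).
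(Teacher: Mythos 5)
Your proposal is correct in substance, and its skeleton is the paper's own: reduce everything to the one\-/variable operators via \cref{eq:spectrum_of_decoupled} and \cref{eq:essspec_of_decoupled}, feed in the closed\-/range and compactness facts for doubling weights from \cite{Marco2003,Marzo2009}, and finish with the Fubini\-/plus\-/doubling comparison between $\int_{B_1(z)}\trace(M_\varphi)\,d\lambda$ and the one\-/variable averages (your converse, testing along $z=\zeta e_{j_0}$, is literally the paper's argument, and your forward direction via the maximal coordinate matches its polydisc scaling). Where you genuinely deviate is in making the auxiliary steps self\-/contained rather than cited: (1) for boundedness the paper invokes closed\-/range theorems for tensor products (\cite[Theorem~4.5]{Chakrabarti2011}, \cite[Corollary~2.15]{Bruening1992}), whereas you read off the spectral inclusion $\spec{\clapl[0,q][\varphi]}\subseteq\{0\}\cup[\delta,\infty)$ from \cref{eq:spectrum_of_decoupled}, which is valid since closed range plus injectivity of $\clapl[0,1][\varphi_j]$ gives $\spec{\clapl[0,1][\varphi_j]}\subseteq[\delta_j,\infty)$ and $\spec{\dbar^*\dbar}\setminus\{0\}=\spec{\dbar\dbar^*}\setminus\{0\}$; (2) for $\ker(\clapl[0,q][\varphi])=0$ the paper discards the gradient terms in \cref{eq:kohn_morrey} and then needs elliptic regularity plus Aronszajn's unique continuation \cite{Aronszajn1957}, whereas you keep those terms, conclude the coefficients are entire, and invoke the identity theorem --- a more elementary route that avoids unique continuation entirely; (3) for (i) you build explicit products of one\-/variable Bergman functions instead of quoting the K\"unneth formula; (4) at top degree you rederive ``$\dbarN[0,n][\varphi]$ compact iff every $\dbarN[0,1][\varphi_j]$ is compact'' from \cref{eq:essspec_of_decoupled} rather than citing \cite[Theorem~5.6]{Berger2015}. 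All of these substitutions are sound.

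The one place you lean on an unjustified input is your fact (a): that each $\bergman(\mathbb C,e^{-\varphi_j})$ is infinite dimensional. This is a real theorem, not a formality, and it is exactly where ``nontrivial doubling'' enters: the paper first notes that a nontrivial doubling measure has infinite total mass, $\int_{\mathbb C}\Delta\varphi_j\,d\lambda=\infty$, and then cites \cite[Theorem~3.2]{Rozenblum2006} to conclude infinite dimensionality (for subharmonic weights with $\int\Delta\varphi_j\,d\lambda<\infty$ the space can be finite dimensional, so some such argument is unavoidable). Your items (i) and (iv) both hinge on this: (iv) needs $0\in\essspec{\clapl[0,0][\varphi_{j^\ast}]}$, and since your own spectral\-/gap argument shows $0$ is isolated in $\spec{\clapl[0,0][\varphi_{j^\ast}]}$, membership in the essential spectrum can only come from the kernel having infinite multiplicity, i.e.\ from fact (a). Supply that reference (or a proof of the infinite\-/mass plus Rozenblum--Shirokov step) and your proof is complete.
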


\begin{proof}
 From \cite[Theorem~C]{Marco2003}, it follows from our assumptions on $\varphi$ that
 $\dbar$ has closed range in $L^2_{0,1}(\mathbb C,e^{-\varphi_j})$ for all $j$.
 From this it also follows that $\dbar$ has closed range in $L^2_{0,q}(\mathbb C^n,e^{-\varphi})$ for all $0 \leq q \leq n$, see \cite[Theorem~4.5]{Chakrabarti2011} or \cite[Corollary~2.15]{Bruening1992}, which implies that the $\dbar$-Neumann operator is bounded.
 
 Moreover, $\ker(\clapl[0,1][\varphi_j]) = 0$ for all $j$.
 In fact, in this simple case, we obtain from the Kohn-Morrey-H\"ormander formula \cref{eq:kohn_morrey}
 \begin{equation}\label{eq:bochner_kodaira_nakano_weighted}
  \|\clapl[0,1][\varphi_j] u\|^2 = \|\dbar_{\varphi_j}^* u\|^2 \geq \frac{1}{4} \int_{\mathbb C} \Delta\varphi_j |u_1|^2 \, e^{-\varphi_j}d\lambda
 \end{equation}
 for all forms $u = u_1 d\overline z \in \dom{\dbar_{\varphi_j}^*}$.
 If $u \in \ker(\clapl[0,1][\varphi_j])$, then $u$ is smooth by elliptic regularity, and if $z_0$ is such that $\Delta\varphi_j(z_0) > 0$ (note that $\Delta\varphi_j \geq 0$ everywhere by subharmonicity), then $u = 0$ in a neighborhood of $z_0$.
 But then $u = 0$ everywhere since $\mathbb C$ is connected and by using a unique continuation principle of Aronszajn, see \cite{Aronszajn1957} or \cite[p.~333]{Demailly2012}.
 We also have $\ker(\clapl[0,q][\varphi])=0$ for all $q \geq 1$, either by combining the above with the \emph{K\"unneth formula}, 
 \begin{equation}\label{eq:kuenneth}
  \ker(\clapl[0,q][\varphi]) \cong \bigoplus_{q_1 + \dotsb + q_n = q} \ker(\clapl[0,q_1][\varphi_1]) \htensor \dotsb \htensor \ker(\clapl[0,q_n][\varphi_n]),
 \end{equation}
 see \cite[Corollary~2.15]{Bruening1992} or \cite[Theorem~4.5]{Chakrabarti2011}, where $\htensor$ denotes the Hilbert space tensor product, or directly by using the same argument as above, i.e., applying \cref{eq:kohn_morrey} to the higher dimensional problem and using that $M_\varphi \neq 0$ at one point.
 
 As a nontrivial doubling measure, $\Delta\varphi_j$ satisfies $\int_{\mathbb C} \Delta\varphi_j\,d\lambda = \infty$.
 Consequently, the weighted Bergman space $\bergman(\mathbb C,e^{-\varphi_j})$ has infinite dimension by \cite[Theorem~3.2]{Rozenblum2006}.
 This also implies, by \cref{eq:kuenneth}, that $\dim{\ker(\clapl[0,0][\varphi])} = \infty$.
 On the other hand, $\dim{\bergman(\mathbb C,e^{-\varphi_j})} = \infty$ for at least one $1 \leq j \leq n$ implies that $\dbarN[0,q][\varphi]$ cannot be compact for $0 \leq q \leq n-1$, see \cite[Theorem~5.6]{Berger2015}:
 indeed, $0 \in \essspec{\clapl[0,0][\varphi_j]}$ and hence \cref{eq:essspec_of_decoupled} implies that the infinite sets $\spec{\clapl[0,0][\varphi_k]}$ for $j \neq k$ are contained in $\essspec{\clapl[0,q][\varphi]}$.
 This finishes the proof of \cref{%
  item:dbar_neumann_noncompactness_decoupled_weighted_spaces_trivial_kernel,%
  item:dbar_neumann_noncompactness_decoupled_weighted_spaces_bergman_space_dim,%
  item:dbar_neumann_noncompactness_decoupled_weighted_spaces_bounded,%
  item:dbar_neumann_noncompactness_decoupled_weighted_spaces_bot%
 }.
 Again by \cite[Theorem~5.6]{Berger2015}, $\dbarN[0,n][\varphi]$ is compact if and only if all $\dbarN[0,1][\varphi_j]$ are compact, which is the case if and only if \cref{eq:weighted_spaces_condition_compactness_doubling} holds for all $1 \leq j \leq n$.
 It remains to show that this is equivalent to \cref{eq:dbar_neumann_compactness_char_decoupled_top}.
 By a simple scaling argument, the claim is equivalent to
 \begin{equation}\label{eq:integrals_polydisk}
  \int_{B_1(z_1) \times \dotsb \times B_n(z_n)} \trace(M_\varphi)\,d\lambda = \frac{\pi^{n-1}}{4} \sum_{j=1}^n \int_{B_1(z_j)} \Delta \varphi_j\,d\lambda \to \infty \quad\text{as}\quad z = (z_1,\dotsc,z_n) \to \infty,
 \end{equation}
 and if \cref{eq:weighted_spaces_condition_compactness_doubling} holds for all $1 \leq j \leq n$, then \cref{eq:integrals_polydisk} is also satisfied.
 Conversely, if \cref{eq:integrals_polydisk} is true, then choosing $z = \zeta e_k$ with $\zeta \in \mathbb C$ and $e_k$ the $k$th standard basis vector of $\mathbb C^n$ implies
 $$ \int_{B_1(\zeta)}\Delta \varphi_k\,d\lambda + \sum_{j \neq k} \int_{B_1(0)}\Delta \varphi_j\,d\lambda \to \infty \quad\text{as}\quad \zeta \to \infty, $$
 so that $\lim_{\zeta \to \infty}\int_{B_1(\zeta)}\Delta\varphi_k \,d\lambda = \infty$ since the second term is bounded.
 This shows \cref{item:dbar_neumann_noncompactness_decoupled_weighted_spaces_top} and concludes the proof.
\end{proof}

\begin{rem}\label{rem:decoupled}
 \begin{inlineenum}
  \item
  The doubling condition is satisfied if the $\Delta \varphi_j$ belong to $A_\infty$, see \cite[p.~196]{Stein1993}, where we recall from \cref{rem:schrodinger} that $A_\infty$ is the union of the Muckenhoupt classes.
  As an example, $z \mapsto |z|^\alpha$ is in $A_p$ for $p > 1$ if and only if $-2 < \alpha < 2(p-1)$, and defines a doubling measure for $-2 < \alpha$, cf.\ \cite[6.4,~p.~218]{Stein1993}.
  Since $\Delta |z|^\alpha = \alpha^2 |z|^{\alpha-2}$, we see that $\varphi_j(z) = |z|^\alpha$ satisfies the assumptions of \cref{dbar_neumann_noncompactness_decoupled_weighted_spaces} for all $\alpha \geq 4$ (so that $\varphi$ is at least $C^2$).

  \item
  Let $n\ge 2$ and consider the weight function
  \begin{equation}\label{eq:decoupled_sum_of_powers}
   \varphi (z) = \sum_{j=1}^n |z_j|^{\alpha_j},
  \end{equation}
  where $\alpha_j \in \mathbb R$, $\alpha_j \geq 4$.
  Then $\varphi \in C^2(\mathbb C^n,\mathbb R)$ and by the above
  $$ \lim_{z\to \infty} \trace(M_\varphi (z)) = \lim_{z \to \infty} \frac{1}{4}\sum_{j=1}^n \alpha_j^2 |z_j|^{\alpha_j-2} = +\infty. $$
  Therefore it follows from \cref{dbar_neumann_noncompactness_decoupled_weighted_spaces} that
  $\dbarN[0,q][\varphi]$ with $0 \leq q \leq n-1$ is not compact while $\dbarN[0,n][\varphi]$ is compact.
  Hence, the necessary conditions \cref{comptrgen} and \cref{eq:weighted_problem_discrete_spectrum_lim_of_average_square} fail to be sufficient for compactness of $\dbarN[0,q][\varphi]$ for $0\le q \le n-1$ in general.
  Of course, by \cref{rem:schrodinger,dbar_neumann_noncompactness_decoupled_weighted_spaces}, the integral condition \cref{eq:weighted_problem_discrete_spectrum_lim_of_average_square} is both necessary and sufficient for compactness of $\dbarN[0,n][\varphi]$ for plurisubharmonic weights $\varphi$ with $\trace(M_\varphi) \in A_\infty$, such as \cref{eq:decoupled_sum_of_powers}.

  \item
  Using a variation of the above decoupled weights, one easily sees that, for $q > n/2$, there is a plurisubharmonic function $\varphi_q \colon \mathbb C^n \to \mathbb R$, such that $\dbarN[0,k][\varphi_q]$ is compact precisely for $q \leq k \leq n$.
  Indeed, one may take
  $$ \varphi_q(z_1,\dotsc,z_n) \coloneqq |(z_1,\dotsc,z_{q-1})|^4 + |(z_q,\dotsc,z_n)|^4. $$
  Then both of the spaces $\bergman(\Cplx^{q-1},e^{-\varphi_1})$ and $\bergman(\Cplx^{n-q+1},e^{-\varphi_2})$, where $\varphi_1 \colon \Cplx^{q-1}\to\R$, $\varphi(z) \coloneqq |z|^4$, and $\varphi_2 \colon \Cplx^{n-q+1}\to\R$, $\varphi_2(z) \coloneqq |z|^4$, have infinite dimension by the result of Shigekawa quoted in \cref{rem:necessary_condition}.
  Moreover, the $\dbar$-Neumann operators $\dbarN[0,q][\varphi_j]$ are compact for $q \geq 1$ and $j \in \{1,2\}$, as is easily deduced by verifying \cref{sq}.
  Since $n-q+1 < n/2+1$ implies $n-q+1 \leq n/2 \leq q-1$, one obtains from \cite[Theorem~5.2]{Berger2015} that $\dbarN[0,k][\varphi]$ is compact exactly for $k = q-1 + j$ with $j \geq 1$, as claimed.
 \end{inlineenum}
\end{rem}

% \printbibliography
% \section*{References}
\bibliography{./references}

\begin{thebibliography}{24}
\providecommand{\natexlab}[1]{#1}
\providecommand{\url}[1]{\texttt{#1}}
\expandafter\ifx\csname urlstyle\endcsname\relax
  \providecommand{\doi}[1]{doi: #1}\else
  \providecommand{\doi}{doi: \begingroup \urlstyle{rm}\Url}\fi

\bibitem[Aronszajn(1957)]{Aronszajn1957}
N.~Aronszajn.
\newblock A unique continuation theorem for solutions of elliptic partial
  differential equations or inequalities of second order.
\newblock \emph{J. Math. Pures Appl. (9)}, 36:\penalty0 235--249, 1957.
\newblock ISSN 0021-7824.

\bibitem[Berger(2016)]{Berger2015}
F.~Berger.
\newblock {Essential spectra of tensor product Hilbert complexes, and the
  $\overline\partial$-Neumann problem on product manifolds}.
\newblock \emph{Journal of Functional Analysis}, 2016.
\newblock \doi{10.1016/j.jfa.2016.06.004}.

\bibitem[Br\"{u}ning and Lesch(1992)]{Bruening1992}
J.~Br\"{u}ning and M.~Lesch.
\newblock {Hilbert complexes}.
\newblock \emph{J. Funct. Anal.}, 108\penalty0 (1):\penalty0 88--132, 1992.
\newblock ISSN 0022-1236.
\newblock \doi{10.1016/0022-1236(92)90147-B}.

\bibitem[Chakrabarti(2010)]{Chakrabarti2010}
D.~Chakrabarti.
\newblock {Spectrum of the complex {L}aplacian on product domains}.
\newblock \emph{Proc. Amer. Math. Soc.}, 138\penalty0 (9):\penalty0 3187--3202,
  2010.
\newblock ISSN 0002-9939.
\newblock \doi{10.1090/S0002-9939-10-10522-X}.

\bibitem[Chakrabarti and Shaw(2011)]{Chakrabarti2011}
D.~Chakrabarti and M.-C. Shaw.
\newblock The {C}auchy-{R}iemann equations on product domains.
\newblock \emph{Math. Ann.}, 349\penalty0 (4):\penalty0 977--998, 2011.
\newblock ISSN 0025-5831.
\newblock \doi{10.1007/s00208-010-0547-x}.

\bibitem[Dall'Ara(2014)]{DallAra2014}
G.~M. Dall'Ara.
\newblock \emph{{Matrix Schr\"odinger operators and weighted Bergman kernels}}.
\newblock PhD thesis, 2014.

\bibitem[Dall'Ara(2015)]{DallAra2015}
G.~M. Dall'Ara.
\newblock {Coercivity of weighted Kohn Laplacians: the case of model monomial
  weights in $\mathbb C^2$}, 2015.

\bibitem[Demailly(2012)]{Demailly2012}
J.-P. Demailly.
\newblock \emph{{Complex analytic and differential geometry}}.
\newblock 2012.
\newblock Available online at
  \url{https://www-fourier.ujf-grenoble.fr/~demailly/}.

\bibitem[Gaffney(1955)]{Gaffney1955}
M.~P. Gaffney.
\newblock {Hilbert space methods in the theory of harmonic integrals}.
\newblock \emph{Trans. Amer. Math. Soc.}, 78:\penalty0 426--444, 1955.
\newblock ISSN 0002-9947.

\bibitem[Haslinger(2011)]{Haslinger2011}
F.~Haslinger.
\newblock Compactness for the {$\overline\partial$}-{N}eumann problem: a
  functional analysis approach.
\newblock \emph{Collect. Math.}, 62\penalty0 (2):\penalty0 121--129, 2011.
\newblock ISSN 0010-0757.
\newblock \doi{10.1007/s13348-010-0013-9}.

\bibitem[Haslinger(2014)]{Haslinger2014}
F.~Haslinger.
\newblock \emph{{The {$\overline{\partial}$}-{N}eumann problem and
  {S}chr\"{o}dinger operators}}, volume~59 of \emph{{De Gruyter Expositions in
  Mathematics}}.
\newblock De Gruyter, Berlin, 2014.
\newblock ISBN 978-3-11-031530-1; 978-3-11-031535-6.
\newblock \doi{10.1515/9783110315356}.

\bibitem[Haslinger and Helffer(2007)]{Haslinger2007}
F.~Haslinger and B.~Helffer.
\newblock {Compactness of the solution operator to {$\overline\partial$} in
  weighted {$L^2$}-spaces}.
\newblock \emph{J. Funct. Anal.}, 243\penalty0 (2):\penalty0 679--697, 2007.
\newblock ISSN 0022-1236.
\newblock \doi{10.1016/j.jfa.2006.09.004}.

\bibitem[H{\"o}rmander(1965)]{Hoermander1965}
L.~H{\"o}rmander.
\newblock {$L^{2}$} estimates and existence theorems for the {$\bar \partial
  $}\ operator.
\newblock \emph{Acta Math.}, 113:\penalty0 89--152, 1965.
\newblock ISSN 0001-5962.

\bibitem[Huybrechts(2005)]{Huybrechts2005}
D.~Huybrechts.
\newblock \emph{{Complex geometry}}.
\newblock {Universitext}. Springer-Verlag, Berlin, 2005.
\newblock ISBN 3-540-21290-6.
\newblock An introduction.

\bibitem[Iwatsuka(1986)]{Iwatsuka1986}
A.~Iwatsuka.
\newblock Magnetic {S}chr\"odinger operators with compact resolvent.
\newblock \emph{J. Math. Kyoto Univ.}, 26\penalty0 (3):\penalty0 357--374,
  1986.
\newblock ISSN 0023-608X.

\bibitem[Ma and Marinescu(2007)]{Ma2007}
X.~Ma and G.~Marinescu.
\newblock \emph{{Holomorphic {M}orse inequalities and {B}ergman kernels}},
  volume 254 of \emph{{Progress in Mathematics}}.
\newblock Birkh\"{a}user Verlag, Basel, 2007.
\newblock ISBN 978-3-7643-8096-0.

\bibitem[Marco et~al.(2003)Marco, Massaneda, and Ortega-Cerd\`{a}]{Marco2003}
N.~Marco, X.~Massaneda, and J.~Ortega-Cerd\`{a}.
\newblock {Interpolating and sampling sequences for entire functions}.
\newblock \emph{Geom. Funct. Anal.}, 13\penalty0 (4):\penalty0 862--914, 2003.
\newblock ISSN 1016-443X.
\newblock \doi{10.1007/s00039-003-0434-7}.

\bibitem[Marzo and Ortega-Cerd\`{a}(2009)]{Marzo2009}
J.~Marzo and J.~Ortega-Cerd\`{a}.
\newblock {Pointwise estimates for the {B}ergman kernel of the weighted {F}ock
  space}.
\newblock \emph{J. Geom. Anal.}, 19\penalty0 (4):\penalty0 890--910, 2009.
\newblock ISSN 1050-6926.
\newblock \doi{10.1007/s12220-009-9083-x}.

\bibitem[Rozenblum and Shirokov(2006)]{Rozenblum2006}
G.~Rozenblum and N.~Shirokov.
\newblock {Infiniteness of zero modes for the {P}auli operator with singular
  magnetic field}.
\newblock \emph{J. Funct. Anal.}, 233\penalty0 (1):\penalty0 135--172, 2006.
\newblock ISSN 0022-1236.
\newblock \doi{10.1016/j.jfa.2005.08.001}.

\bibitem[Ruppenthal(2011)]{Ruppenthal2011a}
J.~Ruppenthal.
\newblock Compactness of the {$\overline\partial$}-{N}eumann operator on
  singular complex spaces.
\newblock \emph{J. Funct. Anal.}, 260\penalty0 (11):\penalty0 3363--3403, 2011.
\newblock ISSN 0022-1236.
\newblock \doi{10.1016/j.jfa.2010.12.022}.

\bibitem[Shigekawa(1991)]{Shigekawa1991}
I.~Shigekawa.
\newblock Spectral properties of {S}chr\"odinger operators with magnetic fields
  for a spin {$\frac12$} particle.
\newblock \emph{J. Funct. Anal.}, 101\penalty0 (2):\penalty0 255--285, 1991.
\newblock ISSN 0022-1236.
\newblock \doi{10.1016/0022-1236(91)90158-2}.

\bibitem[Stein(1993)]{Stein1993}
E.~M. Stein.
\newblock \emph{{Harmonic analysis: real-variable methods, orthogonality, and
  oscillatory integrals}}, volume~43 of \emph{{Princeton Mathematical Series}}.
\newblock Princeton University Press, Princeton, NJ, 1993.
\newblock ISBN 0-691-03216-5.
\newblock With the assistance of Timothy S. Murphy, Monographs in Harmonic
  Analysis, III.

\bibitem[Straube(2010)]{Straube2010}
E.~J. Straube.
\newblock \emph{Lectures on the {$\mathscr{L}^2$}-{S}obolev theory of the
  {$\overline{\partial}$}-{N}eumann problem}.
\newblock ESI Lectures in Mathematics and Physics. European Mathematical
  Society (EMS), Z\"urich, 2010.
\newblock ISBN 978-3-03719-076-0.
\newblock \doi{10.4171/076}.

\bibitem[Wells(1973)]{Wells1973}
R.~O. Wells, Jr.
\newblock \emph{{Differential analysis on complex manifolds}}.
\newblock {Prentice-Hall, Inc., Englewood Cliffs, N.J.}, 1973.
\newblock Prentice-Hall Series in Modern Analysis.

\end{thebibliography}

\end{document}